\theoremstyle{plain}
\newtheorem{thm}{Theorem}
\newtheorem{lem}[thm]{Lemma}
\newtheorem{prop}[thm]{Proposition}
\newtheorem{cor}[thm]{Corollary}
\theoremstyle{definition}
\newtheorem*{defn}{Definition}
\newtheorem*{rem}{Remark}
\newtheorem*{ex}{Example}
\newcommand{\newoperator}[2]{\DeclareMathOperator{#1}{#2}}
\newoperator{\spn}{span}
\newoperator{\graph}{graph}
\newoperator{\im}{Im}
\DeclareRobustCommand\widecheck[1]{{\mathpalette\@widecheck{#1}}}
\def\@widecheck#1#2{%
   \setbox\z@\hbox{\m@th$#1#2$}%
   \setbox\tw@\hbox{\m@th$#1%
      \widehat{%
         \vrule\@width\z@\@height\ht\z@
         \vrule\@height\z@\@width\wd\z@}$}%
   \dp\tw@-\ht\z@
   \@tempdima\ht\z@ \advance\@tempdima2\ht\tw@ \divide\@tempdima\thr@@
   \setbox\tw@\hbox{%
      \raise\@tempdima\hbox{\scalebox{1}[-1]{\lower\@tempdima\box\tw@}}}%
   {\ooalign{\box\tw@ \cr \box\z@}}}
\title
{Toeplitz-composition $C^{*}$-algebras for certain finite Blaschke products}
\author{Hiroyasu Hamada}
\author{Yasuo Watatani}
\address[Hiroyasu Hamada]{Graduate School of Mathematics, Kyushu University,
Hakozaki, Fukuoka, 812-8581, Japan.}
\email{h-hamada@math.kyushu-u.ac.jp}
\address[Yasuo Watatani]{Department of Mathematical Sciences, 
Kyushu University, Hakozaki, Fukuoka, 812-8581, Japan.}
\email{watatani@math.kyushu-u.ac.jp}
\begin{document}

\begin{abstract}
Let $R$ be a finite Blaschke product of degree at least two with 
$R(0)=0$. Then there exists a relation between the associated composition 
operator $C_R$ on the Hardy space and the $C^*$-algebra $\mathcal{O}_R (J_R)$
associated with the complex dynamical system $(R^{\circ n})_n$ 
on the Julia set $J_R$.  We study the $C^*$-algebra $\mathcal{TC}_R$ 
generated by both the composition operator $C_R$ and the Toeplitz operator 
$T_z$ to show that the quotient algebra by the ideal of the compact 
operators is isomorphic to the $C^*$-algebra $\mathcal{O}_R (J_R)$, which is 
simple and purely infinite.

\medskip\par\noindent
KEYWORDS: composition opearator, Blaschke product, Toeplitz operator, 
C*-algebra, complex dynamical system

\medskip\par\noindent
AMS SUBJECT CLASSIFICATION: 46L55, 47B33, 37F10, 46L08

\end{abstract}

\maketitle

\section{Introduction}

Let $\mathbb{D}$ be the open unit disk in the complex plane and 
$H^2 (\mathbb{D})$ the Hardy (Hilbert) space of 
analytic functions whose power series have square-summable 
coefficients. 
For an analytic self-map
$\varphi:\mathbb{D} \rightarrow \mathbb{D} $,  
the composition operator 
$C_{\varphi} :H^2 (\mathbb{D}) \rightarrow H^2 (\mathbb{D})$ 
is defined by $C_{\varphi}(g) = g \circ \varphi$ for $g \in H^2 (\mathbb{D})$ 
and known to be a bounded operator by the Littlewood subordination theorem 
\cite{Littlewood}. 
The study of composition operators 
on the Hardy space $H^2 (\mathbb{D})$ gives a fruitful interplay 
between complex analysis and operator theory as 
shown, for example,  in the books of Shapiro \cite{Shapiro}, 
Cowen and MacCluer \cite{Cowen and MacCluer} and
Mart\'{i}nez-Avenda\~{n}o and Rosenthal \cite{Rosenthal}. 
Since the work \cite{Cowen} by Cowen, good representations of 
adjoints of composition 
operators have been investigated.  Consult 
Cowen and Gallardo-Guti\'{e}rrez 
\cite{CG}, Mart\'{i}n and Vukoti\'{e} \cite{MV}, 
Hammond, Moorhouse and Robbins  \cite{HMR}, 
Bourdon and Shapiro \cite{BS} to know recent achievement 
for  adjoints of composition operators with rational symbols. 
In this paper, we only need an old result by McDonald in \cite{McDonald} 
for finite Blashschke products.   

On the other hand,  
for a branched covering $\pi : M \rightarrow M$, 
Deaconu and Muhly \cite{Deaconu and Muhly} introduced a $C^*$-algebra 
$C^*(M,\pi)$ as the $C^*$-algebra of the r-discrete 
groupoid constructed by Renault \cite{Renault}. In particular, 
they study rational functions on the Riemann sphere $\hat{\mathbb C}$.  
Iterations $(R^{\circ n})_n$ of $R$ by composition give complex dynamical 
systems. 
In \cite{Kajiwara and Watatani} 
Kajiwara and the second-named author introduced slightly different 
$C^*$-algebras 
${\mathcal O}_R(\hat{\mathbb C})$,  
${\mathcal O}_R(J_R)$ and ${\mathcal O}_R(F_R)$
associated with complex dynamical system $(R^{\circ n})_n$ 
on the Riemann sphere $\hat{\mathbb C}$,  
the Julia set $J_R$ and the Fatou set $F_R$ of $R$. 
The $C^*$-algebra ${\mathcal O}_R(J_R)$ is defined  
as a Cuntz-Pimsner algebra \cite{Pimsner} of a Hilbert bimodule, 
called $C^*$-correspondence, $
C(\graph R|_{J_R})$ over $C(J_R)$. 
We regard the algebra ${\mathcal O}_R(J_R)$ as 
a certain analog of the 
crossed product $C(\Lambda_{\Gamma}) \rtimes \Gamma$ of 
$C(\Lambda_{\Gamma})$ by a boundary action of a Kleinian group $\Gamma$ 
on the limit set $\Lambda_{\Gamma}$.  

The aim of the paper is to show that 
there exists a relation between composition 
operators on the Hardy space and the $C^*$-algebras $\mathcal{O}_R (J_R)$
associated with the complex dynamical systems $(R^{\circ n})_n$ 
on the Julia sets $J_R$.  

We recall that the $C^*$-algebra $\mathcal{T}$ generated by 
the Toeplitz operator $T_z$ contains all continuous symbol Toeplitz
operators, and  its quotient by the ideal 
of the compact operators  
on $H^2 (\mathbb{T})$ is isomorphic to the commutative $C^*$-algebra 
$C(\mathbb{T})$ of all continuous functions on $\mathbb{T}$. 
For an analytic self-map
$\varphi:\mathbb{D} \rightarrow \mathbb{D} $,  
we denote by 
$\mathcal{TC}_\varphi$ the Toeplitz-compostion $C^*$-algebra
generated by both the composition operator $C_{\varphi}$ and the Toeplitz 
operator $T_z$. Its quotient algebra  by the ideal 
${\mathcal K}$  of the compact operators is denoted by 
$\mathcal{OC} _{\varphi}$. 
Recently Kriete, MacCluer and Moorhouse
\cite{Kriete MacCluer Moorhouse 1, Kriete MacCluer Moorhouse 2}
studied the Toeplitz-compostion $C^*$-algebra $\mathcal{TC}_\varphi$
for a certain linear fractional self-map $\varphi$.  
They describe the quotient $C^*$-algebra $\mathcal{OC} _{\varphi}$
concretely as a subalgebra of $C(\Lambda) \otimes M_2(\mathbb C)$. 
If $\varphi(z) = e^{-2\pi i\theta}z$ for some irrational 
number $\theta$, then the Toelitz-composion $C^*$-algebra 
$\mathcal{TC}_{\varphi}$ is an extension of the irrational rotation algebra
$A_{\theta}$ by ${\mathcal K}$ and studied in Park \cite{Park}. 
Jury \cite{Jury 1, Jury 2} investigated the $C^*$-algebra generated by 
a group of composition operators with the symbols belonging to a 
non-elementary Fuchsian group $\Gamma$ to relate it   with 
extensions of the crossed product $C(\mathbb{T}) \rtimes \Gamma$
by ${\mathcal K}$. 

In this paper we study a certain class in the remaining cases consisting of 
finite Blaschke products $R$ of degree $n \geq 2$ with $R(0)=0$. 
The boundary 
$\mathbb{T}$ of the open unit 
disk $\mathbb{D}$ is the Julia set $J_R$ of the Blaschke product $R$. 
We show that the quotient algebra  $\mathcal{OC} _{R}$ of the 
Toelitz-composion $C^*$-algebra 
$\mathcal{TC}_R$ by the ideal ${\mathcal K}$ is isomorphic to the $C^*$-algebra $\mathcal{O}_R (J_R)$ 
associated with the complex dynamical system $(R^{\circ n})_n$, 
which is simple and purely infinite. 
We should remark that 
the notion of transfer operator by Exel in \cite{Exel} is one of 
the keys to clarify the above relation. In fact the corresponding 
operator of the composition operator in the quotient algebra 
is the implementing isometry operator. 

The Toeplitz-compostion $C^*$-algebra 
depends on the analytic structure of the Hardy space by the costruction. 
The finite  Blaschke product $R$ is not conjugate with $z^n$  by any 
M\"{o}bius automorphism unless $R(z) = \lambda z^n$. But we can show that 
the quotient algebra  $\mathcal{OC} _{R}$ is isomorphic to $\mathcal{OC} _{z^n}$as a corollary of our main theorem. This enable us to compute 
$K_0(\mathcal{OC} _{R})$ and $K_1(\mathcal{OC} _{R})$ easily.

\section{Toeplitz-composition $C^*$-algebras}

Let $L^2(\mathbb{T})$ denote the square integrable measurable functions on
$\mathbb{T}$ with respect to the nomalized Lebesgue measure.
The Hardy space $H^2(\mathbb{T})$ is the closed subspace of 
$L^2(\mathbb{T})$ consisting of the functions whose negative 
Fourier coefficients vanish. We put $H^{\infty} (\mathbb{T}) :=
H^2(\mathbb{T}) \cap L^{\infty} (\mathbb{T})$. 

The Hardy space $H^2(\mathbb{D})$ is the Hilbert space consisting
of all analytic  functions $g(z) = \sum_{k=0} ^{\infty} c_k z^ k$
on the open unit disk $\mathbb{D}$ such that
$\sum_{k=0} ^{\infty} | c_k | ^2 < \infty$.
The inner product is given  by
\[
 (g|h) = \sum_{k=0} ^{\infty} c_k \overline{d_k}
\]
for
$g(z)= \sum_{k=0} ^{\infty} c_k z^k$ and
$h(z) = \sum_{k=0} ^{\infty} d_k z^k$.

We identify $H^2 (\mathbb{D})$ with $H^2(\mathbb{T})$ by a unitary 
$U : H^2 (\mathbb{D}) \rightarrow H^2(\mathbb{T})$. We note that 
$\widetilde{g} = Ug$ is given as
\[
 \widetilde{g}(e^{i \theta}) :=\lim_{r \to 1-} g(re^{i \theta})  \ \
 \a.e. \theta
\]
for $g \in H^2 (\mathbb{D})$ by Fatou's theorem.
Moreover the inverse $\widecheck{f} = U^* f$ is given as a Poisson integral 
\[
 \widecheck{f}(re^{i\theta}) := \frac{1}{2 \pi} \int_{0} ^{2 \pi} P_r (\theta -
 t)f(e^{it}) dt
\]
for $f \in H^2 (\mathbb{T})$, where $P_r$ is the Poisson kernel defined by
\[
 P_r (\theta) = \frac{1-r^2}{1-2r\cos{\theta}+r^2}, 
 \quad 0 \leq r < 1, \ 0 \leq \theta \leq 2 \pi.
\]

Let  $P_{H^2} : L^2 (\mathbb{T}) \rightarrow H^2 (\mathbb{T}) \subset L^2 (\mathbb{T})$ be the projection.
For $a \in L^{\infty}(\mathbb{T})$,  the Toeplitz operator $T_a$ on
$H^2(\mathbb{T})$ is defined by $T_a f = P_{H^2} a f$
for $f \in H^2 (\mathbb{T})$.

Let $\varphi: \mathbb{D} \rightarrow \mathbb{D}$ be an analytic 
self-map. 
Then the  composition operator $C_{\varphi}$ on $H^2 (\mathbb{D})$ is defined by
$C_{\varphi} g = g \circ \varphi$ for $g \in H^2 ({\mathbb{D}})$. By the Littlewood subordination
theorem, $C_{\varphi}$ is always bounded.  

We can regard Toeplitz operators and composition operators as acting 
 on the same Hilbert space  by the unitary $U$ above.
More precisely, we put  $\widecheck{T_a} = U^* T_a U $ and
$\widetilde{C_{\varphi}} = U C_{\varphi} U^*$. 
If $\varphi$ is a inner function, then 
we know  that 
$\widetilde{g \circ \varphi} =
\widetilde{g} \circ \widetilde{\varphi}$  for  $g \in H^2 (\mathbb{D})$ 
by Ryff \cite[Theorem 2]{Ryff}. 
Therefore we may write 
$\widetilde{C_{\varphi}}f = f \circ \widetilde{\varphi}$ 
for $f \in H^2(\mathbb{T})$.

\begin{defn} 
For an analytic self-map
$\varphi:\mathbb{D} \rightarrow \mathbb{D} $, 
we denote by $\mathcal{TC} _{\varphi}$ the $C^*$-algebra generated
by the Toeplitz operator $\widecheck{T_z}$ and 
the composition operator  $C_{\varphi}$ on $H^2(\mathbb{D})$. 
The $C^*$-algebra $\mathcal{TC} _{\varphi}$ is 
called the Toeplitz-composition $C^*$-algebra with symbol $\varphi$. 
Since $\mathcal{TC} _{\varphi}$ contains the ideal 
$K(H^2(\mathbb{D}))$ of compact operators,  
we define a $C^*$-algebra $\mathcal{OC} _{\varphi}$ to be the quotient
$C^*$-algebra $\mathcal{TC} _{\varphi} / K(H^2(\mathbb{D}))$.
By the unitary $U:H^2 (\mathbb{D}) \rightarrow H^2 (\mathbb{T})$ above, 
we usually identify  $\mathcal{TC}_{\varphi}$  with 
the $C^*$-algebra generated by $T_z$ and $\widetilde{C_\varphi}$. We also  
use the same notations  $\mathcal{TC}_{\varphi}$
and $\mathcal{OC} _{\varphi}$ for the operators on $H^2(\mathbb{T})$. 
But we sometimes need to treat them carefully. Therefore  
we often use the notations 
$\widetilde{g} = Ug$ and $\widecheck{f} = U^* f$ to avoid confusions 
in the paper. Wise readers may neglect these troublesome notations. 
\end{defn}

\section{$C^*$-algebras associated with complex dynamical systems}

We recall the construction of Cuntz-Pimsner algebras \cite{Pimsner}. 
Let $A$ be a $C^*$-algebra and $X$ be a Hilbert right $A$-module.  
We denote by $L(X)$ the algebra of the adjointable bounded operators 
on $X$.  
For $\xi$, $\eta \in X$, the operator $\theta _{\xi,\eta}$
is defined by $\theta _{\xi,\eta}(\zeta) = \xi(\eta|\zeta)_A$
for $\zeta \in X$. 
The closure of the linear span of these operators is denoted by $K(X)$. 
We say that 
$X$ is a Hilbert $C^*$-bimodule (or $C^*$-correspondence) 
over $A$ if $X$ is a Hilbert right $A$-module 
with a *-homomorphism $\phi : A \rightarrow L(X)$. We always assume 
that $X$ is full and $\phi$ is injective. 
   Let $F(X) = \bigoplus _{n=0}^{\infty} X^{\otimes n}$
be the full Fock module of $X$ with a convention $X^{\otimes 0} = A$. 
 For $\xi \in X$, the creation operator $T_{\xi} \in L(F(X))$ is defined by 
\[
T_{\xi}(a) =  \xi a  \quad \text{and} \quad 
T_{\xi}(\xi _1 \otimes \dots \otimes \xi _n) = \xi \otimes 
\xi _1 \otimes \dots \otimes \xi _n .
\]
We define $i_{F(X)}: A \rightarrow L(F(X))$ by 
$$
i_{F(X)}(a)(b) = ab \quad \text{and} \quad 
i_{F(X)}(a)(\xi _1 \otimes \dots \otimes \xi _n) = \phi (a)
\xi _1 \otimes \dots \otimes \xi _n 
$$
for $a,b \in A$.  The Cuntz-Toeplitz algebra ${\mathcal T}_X$ 
is the C${}^*$-algebra acting on $F(X)$ generated by $i_{F(X)}(a)$
with $a \in A$ and $T_{\xi}$ with $\xi \in X$. 

Let $j_K | K(X) \rightarrow {\mathcal T}_X$ be the homomorphism 
defined by $j_K(\theta _{\xi,\eta}) = T_{\xi}T_{\eta}^*$. 
We consider the ideal $I_X := \phi ^{-1}(K(X))$ of $A$. 
Let ${\mathcal J}_X$ be the ideal of ${\mathcal T}_X$ generated 
by $\{ i_{F(X)}(a) - (j_K \circ \phi)(a) \, ; \, a \in I_X\}$.  Then 
the Cuntz-Pimsner algebra ${\mathcal O}_X$ is defined as 
the quotient ${\mathcal T}_X/{\mathcal J}_X$ . 
Let $\pi : {\mathcal T}_X \rightarrow {\mathcal O}_X$ be the 
quotient map.  
We set $S_{\xi} = \pi (T_{\xi})$ and $i(a) = \pi (i_{F(X)}(a))$. 
Let $i_K : K(X) \rightarrow {\mathcal O}_X$ be the homomorphism 
defined by $i_K(\theta _{\xi,\eta}) = S_{\xi}S_{\eta}^*$. Then 
$\pi((j_K \circ \phi)(a)) = (i_K \circ \phi)(a)$ for $a \in I_X$.   

The Cuntz-Pimsner algebra ${\mathcal O}_X$ is 
the universal C${}^*$-algebra generated by $i(a)$ with $a \in A$ and 
$S_{\xi}$ with $\xi \in X$  satisfying that 
$i(a)S_{\xi} = S_{\phi (a)\xi}$, $S_{\xi}i(a) = S_{\xi a}$, 
$S_{\xi}^*S_{\eta} = i((\xi | \eta)_A)$ for $a \in A$, 
$\xi, \eta \in X$ and $i(a) = (i_K \circ \phi)(a)$ for $a \in I_X$.
We usually identify $i(a)$ with $a$ in $A$.  If $A$ is unital and 
$X$ has a finite basis $\{u_i\}_{i=1}^n$ in the sense that 
$\xi = \sum _{i=1}^n u_i(u_i | \xi)_A$, then the last condition can be 
replaced by that there exist a finite set $\{v_j\}_{j=1}^m \subset X$ 
such that $\sum _{j=1}^m S_{v_j}S_{v_j}^* = I$. Then   
$\{v_j\}_{j=1}^m$  becomes a finite basis of $X$ automatically.

Next we introduce the $C^*$-algebras associated with complex dynamical 
systems as in \cite{Kajiwara and Watatani}. 
Let $R$ be a rational function of degree at least two. 
The sequence $(R^{\circ n})_n$ of iterations of composition by $R$ 
gives a complex dynamical system on the Riemann sphere
 $\hat{\mathbb C} = {\mathbb C} 
\cup \{ \infty \}$. 
The Fatou set $F_R$ of $R$ is the maximal open subset of 
$\hat{\mathbb C}$ on which $(R^{\circ n})_n$ is equicontinuous (or 
a normal family), and the Julia set $J_R$ of $R$ is the 
complement of the Fatou set in $\hat{\mathbb C}$.  
We denote by $e(z_0)$  the 
branch index of $R$ at $z_0$.  
Let $A= C(\hat{\mathbb C})$ and $X = C(\graph R)$ be the set of continuous 
functions on $\hat{\mathbb C}$ and $\graph R$ respectively, 
where $\graph R = \{(x,y) \in \hat{\mathbb C}^2 \, ; \, y = R(x)\} $ 
is the graph of $R$.
Then $X$ is an $A$-$A$ bimodule by 
$$
(a\cdot \xi \cdot b)(x,y) = a(x)\xi(x,y)b(y),\quad a,b \in A,\; 
\xi \in X.$$ 
We define an $A$-valued inner product $(\ |\ )_A$ on $X$ by 
$$
(\xi|\eta)_A(y) = \sum _{x \in R^{-1}(y)} e(x) \overline{\xi(x,y)}\eta(x,y),
\quad \xi,\eta \in X,\; y \in \hat{\mathbb C}.$$  
Thanks to the branch index $e(x)$, the inner product above gives a continuous  
function and $X$ is a full Hilbert bimodule over $A$ without completion. 
The left action of $A$ is unital and faithful. 

Since the Julia set $J_R$ is completely invariant under $R$, i.e., 
$R(J_R) = J_R = R^{-1}(J_R)$, we can consider the restriction 
$R|_{J_R} : J_R \rightarrow J_R$, which will be often denoted by 
the same letter $R$.
 Let $\graph R|_{J_R} = \{(x,y) \in J_R \times J_R \, ; \, y = R(x)\} $
be the graph of the restriction map $R|_{J_R}$ and 
$X_R = C(\graph R|_{J_R})$. 
In the same way as above, $X_R$ is a full Hilbert bimodule over $C(J_R)$.

\begin{defn} 
The $C^*$-algebra ${\mathcal O}_R(\hat{\mathbb C})$ 
on $\hat{\mathbb C}$ is defined 
as the Cuntz-Pimsner algebra of the Hilbert bimodule 
$X= C(\graph R)$ over 
$A = C(\hat{\mathbb C})$. 
We also define the $C^*$-algebra ${\mathcal O}_R(J_R)$ 
on the Julia set $J_R$ 
as the Cuntz-Pimsner algebra of the Hilbert bimodule $
X_R = C(\graph R|_{J_R})$ over $A = C(J_R)$.
\end{defn}

\section{transfer operators and adjoints of composition operators}

In the rest of the paper, we assume that $R$ is a
a finite Blaschke product of degree at least two with $R(0)=0$,
that is
\[
 R(z) = \lambda z \prod_{k=1} ^{n-1} \frac{z-z_k}{1-\overline{z_k}z}
      = \lambda \ \prod_{k=0} ^{n-1} \frac{z-z_k}{1-\overline{z_k}z},
      \quad z \in \hat{\mathbb{C}},
\]
where $n \geq 2, \ z_1,\dots, z_{n-1} \in \mathbb{D}, \  |\lambda | = 1 $
and $z_0 = 0$.  
Thus $R$ is a rational function with degree $\deg R = n$. 
Since $R$ is an inner function,  $R$ is an analytic self-map on $\mathbb{D}$.
We consider the composition operator $C_R$ with symbol $R$. Since $R$ is inner 
and  $R(0) = 0$,  
$C_R$ is an isometry by Nordgren \cite{Nordgren}.  
We note the following fact:

\begin{align}
 \frac{z R'(z)}{R(z)} = 1 + \sum_{k=1}^{n-1}
 \frac{1-|z_k|^2}{|z-z_k|^2} >0, \quad z \in
 \mathbb{T}. \label{derivative}
\end{align}
Thus $R$ has no branched  points on  $\mathbb{T}$ and the branch index 
$e(z)=1$ for any $z \in \mathbb{T}$. 
Furthermore the Julia set $J_R$ of $R$ is $\mathbb{T}$ (see,
for example, \cite[Page 70-71]{Milnor}) and it coincides with 
the boundary of the disk  $\mathbb{D}$. 

Let $A=C(\mathbb{T})$ and $h \in A$ be a positive invertible element.
Set $\graph R|_{\mathbb{T}} = \{ (z, w) \in \mathbb{T}^2 \, ; \, w = R(z) \}$
and $X_{R,h}=C(\graph R|_{\mathbb{T}})$.
We define 
\begin{align*}
(a \cdot \xi \cdot b)(z,w) &= a(z)\xi(z,w)b(w), \\
(\xi |\eta )_{A, h} (w) &= \sum _{z \in R^{-1}(w)}
h(z) \overline{\xi (z,w)} \eta (z,w)
\end{align*}
for $a,b \in A $ and $\xi, \eta \in X_{R,h}$.
We  see that $X_{R,h}$ is a pre-Hilbert $A$-$A$ bimodule
whose left action is faithful. Since 
the Julia set $J_R$ = $\mathbb{T}$ has no branched point, for the 
constant function $h = 1$,  $X_{R,1}$ has a finite basis and 
coincides with the Hilbert $A$-$A$
bimodule $X_R= C(\graph R|_{\mathbb{T}})$. 
Let $\{u_k \}_{k=1}^N$ be a basis of $X_R$. For a positive 
invertible element $h \in A$, put $v_k = h^{-1/2} u_k$.  Then $ \{ v_k \}_{k=1}^N $ is a basis of $X_{R,h}$,  and $X_{R,h}$ is also a Hilbert module without 
completion.

\begin{defn}
The $C^*$-algebra $\mathcal{O}_{R,h} (\mathbb{T})$ is definded as the 
Cuntz-Pimsner algebra of the Hilbert bimodule
$X_{R,h} = C(\graph R|_{\mathbb{T}})$ over $A=C(\mathbb{T})$.
The $C^*$-algebra $\mathcal{O}_{R,h} (\mathbb{T})$ is the universal $C^*$-algebra
generated  by $\{ \hat{S}_{\xi} \ ; \ \xi \in X_{R,h} \}$ and $A$
satisfying the following relations:
\[
 a \hat{S}_{\xi} = \hat{S}_{a \cdot \xi}, \ \hat{S}_{\xi} b =
 \hat{S}_{\xi \cdot b}, \ \hat{S}_{\xi} ^* \hat{S}_{\eta} = (\xi|
 \eta )_{A ,h}, \  \sum_{k=1} ^N \hat{S}_{v_k}
 \hat{S}_{v_k} ^*
 = I
\]
for $a,b \in A$ and $\xi, \eta \in X_{R,h}$. 
The $C^*$-algebra $\mathcal{O}_{R,h} (\mathbb{T})$ is in fact 
a topological quiver algebra in  Muhly and Solel \cite{MS},
Muhly and Tomforde \cite{MT}. 
\end{defn}

We will use  the symbol $a$ and $S_{\xi}$ to denote the generator of
$\mathcal{O}_R (J_R)$ for $a \in A$ and $\xi \in X_R$.

In the rest of the paper, we choose and fix $h \in A$ defined by 
\[
 h(z)= \frac{nR(z)}{z R'(z)}, \quad z \in \mathbb{T}. 
\]
Then $h$ is positive and invertible by (\ref{derivative}).

We need and collect several facts as lemmas to prove our main theorem.  
Some of them might be considered folklore. 

\begin{lem} \label{lem:isomorphic}
Let $R$ be a finite Blaschke product of degree at least two with $R(0)=0$. 
Then the $C^*$-algebra $\mathcal{O}_{R,h} (\mathbb{T})$ is isomorphic to the
$C^*$-algebra $\mathcal{O}_R (J_R)$ by an isomorphism $\Phi$
such that $\Phi(a) = a$ and $\Phi(\hat{S}_{\xi}) = h^{1/2} S_{\xi}$
for $a \in A, \, \xi \in X_{R,h}$.
\end{lem}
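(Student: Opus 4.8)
The plan is to invoke the universal property of the Cuntz--Pimsner algebra $\mathcal{O}_{R,h}(\mathbb{T})$ recorded in the preceding definition. Both algebras are built over the same coefficient algebra $A = C(\mathbb{T})$ out of the same underlying space $C(\graph R|_{\mathbb{T}})$ of continuous functions; the only difference between the correspondences $X_{R,h}$ and $X_R$ lies in their $A$-valued inner products, the former weighted by $h$ and the latter by the branch index $e$. The crucial point, which I would isolate first, is that by (\ref{derivative}) there are no branched points on $\mathbb{T}$, so $e(z)=1$ for every $z \in \mathbb{T}$ and the inner product on $X_R$ reduces to the unweighted sum over the fibre $R^{-1}(w)$.

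First I would define candidate generators in $\mathcal{O}_R(J_R)$ by setting $\hat{S}_{\xi} := h^{1/2} S_{\xi}$ for $\xi \in X_{R,h}$, keeping $a \in A$ unchanged, and then verify that these satisfy the four defining relations of $\mathcal{O}_{R,h}(\mathbb{T})$. Since $A$ is commutative, $h^{1/2}$ commutes with every $a \in A$, so the two bimodule relations $a\hat{S}_{\xi} = \hat{S}_{a\cdot\xi}$ and $\hat{S}_{\xi} b = \hat{S}_{\xi\cdot b}$ follow at once from $aS_{\xi} = S_{a\cdot\xi}$ and $S_{\xi} b = S_{\xi\cdot b}$ in $\mathcal{O}_R(J_R)$. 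For the inner-product relation I would compute
\[
\hat{S}_{\xi}^* \hat{S}_{\eta} = S_{\xi}^* \, h \, S_{\eta} = S_{\xi}^* S_{h\cdot\eta} = (\xi | h\cdot\eta)_A,
\]
and then observe that, because $e(z)=1$, one has $(\xi | h\cdot\eta)_A(w) = \sum_{z \in R^{-1}(w)} h(z)\overline{\xi(z,w)}\eta(z,w) = (\xi | \eta)_{A,h}(w)$, as required. Finally, since $v_k = h^{-1/2}\cdot u_k$ gives $\hat{S}_{v_k} = h^{1/2} S_{h^{-1/2}\cdot u_k} = S_{u_k}$, the Cuntz relation $\sum_k \hat{S}_{v_k}\hat{S}_{v_k}^* = \sum_k S_{u_k}S_{u_k}^* = I$ is inherited from $\mathcal{O}_R(J_R)$.

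With all four relations verified, the universal property yields a $*$-homomorphism $\Phi \colon \mathcal{O}_{R,h}(\mathbb{T}) \to \mathcal{O}_R(J_R)$ with $\Phi(a)=a$ and $\Phi(\hat{S}_{\xi}) = h^{1/2}S_{\xi}$. To see that $\Phi$ is an isomorphism I would run the same computation in the reverse direction: setting $S_{\xi} := h^{-1/2}\hat{S}_{\xi}$ in $\mathcal{O}_{R,h}(\mathbb{T})$ and using that $e(z)=1$ once more, these elements satisfy the defining relations of $\mathcal{O}_R(J_R)$, producing a homomorphism $\Psi$ with $\Psi(S_{\xi}) = h^{-1/2}\hat{S}_{\xi}$. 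Since $\Phi$ and $\Psi$ act as mutually inverse maps on the generators $a$, $\hat{S}_{\xi}$, $S_{\xi}$, they compose to the identity on each algebra, so $\Phi$ is the desired isomorphism. Equivalently, one can phrase the whole argument as the single observation that $V\xi := h^{1/2}\cdot\xi$ defines a bijective isomorphism of $C^*$-correspondences from $X_{R,h}$ onto $X_R$ --- it is an $A$-bimodule map with $(V\xi|V\eta)_A = (\xi|\eta)_{A,h}$ --- whence functoriality of the Cuntz--Pimsner construction delivers the isomorphism directly. The only genuine input is the vanishing of the branch index on $\mathbb{T}$; everything else is bookkeeping of the universal relations, so I anticipate no serious obstacle.
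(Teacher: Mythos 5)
Your proposal is correct and follows essentially the same route as the paper: define $h^{1/2}S_{\xi}$ in $\mathcal{O}_R(J_R)$, verify the four universal relations of $\mathcal{O}_{R,h}(\mathbb{T})$ (using that the branch index is $1$ on $\mathbb{T}$ so that $(h^{1/2}\cdot\xi\,|\,h^{1/2}\cdot\eta)_A=(\xi|\eta)_{A,h}$), and invoke universality. You are merely more explicit than the paper about why the map is bijective, supplying the inverse $S_{\xi}\mapsto h^{-1/2}\hat{S}_{\xi}$ (equivalently, noting that $\xi\mapsto h^{1/2}\cdot\xi$ is an isomorphism of correspondences), which is a welcome elaboration rather than a different argument.
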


\begin{proof}
Let $V_{\xi} = h^{1/2} S_{\xi}$ for $\xi \in X_{R,h}$.  
Then we have that 
\[
 a V_{\xi} = V_{a \cdot \xi}, \ V_{\xi} b = V_{\xi \cdot b}, 
\]
\[
 \ V_{\xi} ^* V_{\eta} = ( h^{1/2} \cdot {\xi} | h^{1/2} \cdot {\eta} )_A 
=  ( \xi| \eta )_{A ,h} , \ \ 
 \sum_{k=1} ^N V_{v_k} V_{v_k} ^* = I
\]
for $a,b \in A$ and $\xi, \eta \in X_{R,h}$.
By the universality, we have the desired isomorphism. 
\end{proof}

\begin{defn}
For a function $f$ on  $\mathbb{T}$,
we define a function $\mathcal{L}_R(f)$ on $\mathbb{T}$ by
\[
 \mathcal{L}_R (f)(w) = \frac{1}{n}\sum_{z \in R^{-1} (w)} h(z) f(z) 
= \sum_{z \in R^{-1} (w)} \frac{R(z)}{z R'(z)} f(z),
\quad \ w \in \mathbb{T}.
\]
\end{defn}

We mainly consider the restrictions of $\mathcal{L}_R$ to $A = C(\mathbb{T})$ 
and $H^2(\mathbb{T})$ and use the same notation if no confusion can arise.

The notion of transfer operator by Exel in \cite{Exel} is one of 
the keys to clarify our situation. 

\begin{lem} \label{lem:L(1)=1}
Let $R$ be a finite Blaschke product of degree at least two with $R(0)=0$. 
Let $A=C(\mathbb{T})$ and $\alpha : A \rightarrow A$ 
be the unital *-endomorphism defined by $(\alpha (a))(z)= a(R(z))$ for
$a \in A$ and $z \in \mathbb{T}$.
Then the restriction of $\mathcal{L}_R$ to $A=C(\mathbb{T})$ is  a transfer 
operator for the pair $(A, \alpha)$ in the sense of Exel, that is, 
$\mathcal{L}_R$ is a positive linear map such that
\[
\mathcal{L}_R(\alpha(a) b) = a \mathcal{L}_R(b) \text{ for } a,b \in A. 
\]
Moreover $\mathcal{L}_R$ satisfies that $\mathcal{L}_R(1) = 1$. 
\end{lem}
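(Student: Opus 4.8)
The plan is to verify the three assertions in turn, the first two being routine and the third carrying the actual content. First I would note that $\mathcal{L}_R$ is manifestly linear and positive: since $h(z) > 0$ on $\mathbb{T}$ by (\ref{derivative}), if $f \geq 0$ then every summand $h(z)f(z)$ is nonnegative, so $\mathcal{L}_R(f) \geq 0$. That $\mathcal{L}_R(f) \in A = C(\mathbb{T})$ follows just as for the bimodule inner product, because $\mathbb{T} = J_R$ carries no branch points by (\ref{derivative}), so locally $R^{-1}$ splits into $n$ continuous inverse branches and $\mathcal{L}_R(f)$ is a finite sum of continuous functions. For the transfer identity I would compute directly: for $z \in R^{-1}(w)$ one has $R(z) = w$, hence $\alpha(a)(z) = a(R(z)) = a(w)$, and therefore
\[
\mathcal{L}_R(\alpha(a)b)(w) = \frac{1}{n}\sum_{z \in R^{-1}(w)} h(z)\, a(w)\, b(z) = a(w)\,\mathcal{L}_R(b)(w),
\]
which is exactly $a\,\mathcal{L}_R(b)$.

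The substance of the lemma is the normalization $\mathcal{L}_R(1) = 1$. Using $R(z) = w$ on $R^{-1}(w)$ I would rewrite
\[
\mathcal{L}_R(1)(w) = \sum_{z \in R^{-1}(w)} \frac{R(z)}{zR'(z)} = w \sum_{z \in R^{-1}(w)} \frac{1}{zR'(z)},
\]
so it suffices to establish $\sum_{z \in R^{-1}(w)} \frac{1}{zR'(z)} = \frac{1}{w}$ for $w \in \mathbb{T}$. Write $R = P/Q$ with $P(z) = \lambda \prod_{k=0}^{n-1}(z - z_k)$ and $Q(z) = \prod_{k=0}^{n-1}(1 - \overline{z_k}z)$, and set $F_w(z) = P(z) - wQ(z)$, whose roots are precisely the points of $R^{-1}(w)$. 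Differentiating $R = P/Q$ and using $P(z_j) = wQ(z_j)$ at a root $z_j$ gives $F_w'(z_j) = R'(z_j)Q(z_j)$, whence $\frac{1}{z_j R'(z_j)} = \frac{Q(z_j)}{z_j F_w'(z_j)}$.

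The identity then drops out of a residue count for the rational function $g(z) = \frac{Q(z)}{zF_w(z)}$. Since $w \in \mathbb{T}$ yields $n$ distinct preimages (no branching on $\mathbb{T}$) and $\deg F_w = n$, each $z_j$ is a simple pole with residue $\frac{Q(z_j)}{z_j F_w'(z_j)} = \frac{1}{z_j R'(z_j)}$. At $z = 0$ I would use $R(0) = 0$, i.e. $P(0) = 0$ (note $z_0 = 0$), together with $Q(0) = 1$, to get $F_w(0) = -w$ and hence residue $-1/w$; here $w \neq 0$ guarantees $0$ is not one of the $z_j$. Finally, because $z_0 = 0$ lowers $\deg Q$ to $n-1$ while $\deg(zF_w) = n+1$, one has $g(z) = O(z^{-2})$ as $z \to \infty$, so the residue at infinity vanishes and the finite residues sum to zero. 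This gives $\sum_j \frac{1}{z_j R'(z_j)} = \frac{1}{w}$, hence $\mathcal{L}_R(1)(w) = 1$.

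The only delicate points are the degree bookkeeping—that the factor $z_0 = 0$ forces $\deg Q = n-1$ and thus the $O(z^{-2})$ decay needed to kill the residue at infinity—and the verification that all poles are simple, which rests on the absence of branching on $\mathbb{T}$ recorded in (\ref{derivative}). Everything else is formal.
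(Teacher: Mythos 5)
Your proof is correct and takes essentially the same route as the paper: both reduce $\mathcal{L}_R(1)=1$ to a residue identity for a rational function whose finite poles are the preimages $R^{-1}(w)$, i.e.\ the roots of $F_w = P - wQ$. The paper runs the partial-fraction decomposition of $\frac{R(z)/z}{R(z)-w} = \frac{P(z)}{zF_w(z)}$ (no pole at $0$, total residue $1$ read off at infinity), whereas you use $\frac{Q(z)}{zF_w(z)}$ (an extra simple pole at $0$ contributing $-1/w$, vanishing residue at infinity); the two computations are equivalent, and your degree bookkeeping, the identity $F_w'(z_j)=R'(z_j)Q(z_j)$, and the simplicity-of-poles check via the no-branching inequality are all sound.
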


\begin{proof}
The only non-trivial one is to show that  $\mathcal{L}_R(1) = 1$. 
But this is an easy calculus as follows:
We fix $w \in \mathbb{T}$ and let $\alpha_1, \dots ,\alpha_n$ be the
exactly different solutions of $R(z)=w$.
By the partial fraction decompostion, there exists $\lambda_1, \dots
,\lambda_n \in \mathbb{C}$ such that
\[
  \frac{R(z)/z}{R(z)-w}= \sum_{k=1} ^n \frac{\lambda_k}{z-\alpha_k}.
\]
Multiplying by $z$ and letting  $z \rightarrow \infty $, we have
$\sum_{k=1} ^n \lambda_k =1. \label{Blaschke2}$ 
Moreover multiplying by $z-\alpha_l$ and letting 
$z \rightarrow \alpha_l$, we get 
$\lambda_l = \frac{R(\alpha_l)}{\alpha_l R'(\alpha_l)}. $
This shows that $\mathcal{L}_R(1) = 1$.  See, for example, 
\cite{Daepp Gorkin Mortini}.

\end{proof}

Let $R$ be a finite Blaschke product of degree at least two with  $R(0)=0$. 
J. N. McDonald \cite{McDonald} calculated the adjoint of $C_R$ and gave 
a formula. We follow some of his argument, but we also need a different  
formula to prove our main theorem. 
We claim that there exist $\theta_0 \in [0, 2 \pi]$ and a strictly increasing
continuously differentiable function
$\psi:[\theta_0 - 2 \pi \, , \, \theta_0] \rightarrow \mathbb{R}$
such that $\psi(\theta_0 - 2 \pi)=0$, $\psi(\theta_0)=2n \pi$
and $R(e^{i \theta}) = e^{i \psi(\theta)}$.

\begin{lem} \label{lem:bounded}
Let $R$ be a finite Blaschke product of degree at least two with $R(0)=0$. 
Then $\mathcal{L}_R(H^2(\mathbb{T})) \subset H^2(\mathbb{T})$ and 
the restriction 
$\mathcal{L}_R:H^2(\mathbb{T}) \rightarrow H^2(\mathbb{T})$ is a bounded
operator such that $\widetilde{C_R} ^* = \mathcal{L}_R$.
\end{lem}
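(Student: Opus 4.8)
The plan is to prove the identity $\widetilde{C_R}^* = \mathcal{L}_R$ on a dense subspace and then invoke boundedness to extend it. First I would establish that $\mathcal{L}_R$ maps $H^2(\mathbb{T})$ into $H^2(\mathbb{T})$ and is bounded there. The natural dense subspace to test on is the set of polynomials (or the trigonometric monomials $z^m$, $m \geq 0$), since these span a dense subspace of $H^2(\mathbb{T})$ and their images under the relevant operators are explicitly computable. The key computation is to show that for all $f, g \in H^2(\mathbb{T})$ (or at least on the dense polynomial subspace), $(\widetilde{C_R} f \mid g) = (f \mid \mathcal{L}_R g)$, which by the definition of adjoint identifies $\mathcal{L}_R$ with $\widetilde{C_R}^*$ once boundedness is in hand.

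The heart of the argument is a change-of-variables computation using the parametrization $R(e^{i\theta}) = e^{i\psi(\theta)}$ introduced just before the statement. I would write the inner product $(\widetilde{C_R} f \mid g)$ as an integral over $\mathbb{T}$ against normalized Lebesgue measure, substitute $\widetilde{C_R} f = f \circ \widetilde{R}$, and then use the strictly increasing diffeomorphism $\psi$ to change variables from $\theta$ to $w = e^{i\psi(\theta)}$. Since $R$ has degree $n$ and no branch points on $\mathbb{T}$ (by equation (\ref{derivative})), each $w \in \mathbb{T}$ has exactly $n$ preimages $z \in R^{-1}(w)$, and the change of variables produces a sum over these $n$ preimages. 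The Jacobian factor $d\theta/d\psi$ is exactly $\frac{1}{\psi'(\theta)}$, and since $\psi'(\theta) = \frac{\partial}{\partial\theta}\arg R(e^{i\theta})$ equals $\frac{z R'(z)}{R(z)}$ at $z = e^{i\theta}$, the reciprocal $\frac{R(z)}{z R'(z)}$ appears; this is precisely the weight $\frac{1}{n} h(z)$ defining $\mathcal{L}_R$. Assembling the $n$ preimage terms with these weights yields $\int_{\mathbb{T}} \overline{f(z)}\, (\text{pullback of } g)\, \ldots$ and hence $(f \mid \mathcal{L}_R g)$ after recognizing the weighted preimage sum.

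I would obtain boundedness either as a consequence of the adjoint identity itself (once the pairing $(\widetilde{C_R} f \mid g) = (f \mid \mathcal{L}_R g)$ is verified on a dense subspace, boundedness of $\mathcal{L}_R$ follows from boundedness of $\widetilde{C_R} = U C_R U^*$, which is guaranteed by Littlewood's subordination theorem and Nordgren's result that $C_R$ is an isometry), or directly by estimating $\|\mathcal{L}_R f\|$ using $\mathcal{L}_R(1) = 1$ from Lemma \ref{lem:L(1)=1}. The cleaner route is the former: verify the pairing on polynomials, note that this shows $\mathcal{L}_R g$ agrees with $\widetilde{C_R}^* g$ as a functional, and conclude $\mathcal{L}_R = \widetilde{C_R}^*$ on the dense subspace, so that $\mathcal{L}_R$ inherits boundedness and the identity extends to all of $H^2(\mathbb{T})$ by continuity. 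To close the invariance claim $\mathcal{L}_R(H^2(\mathbb{T})) \subset H^2(\mathbb{T})$, I note that $\widetilde{C_R}^*$ automatically maps $H^2(\mathbb{T})$ to itself, so this is subsumed once the identification is complete.

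The main obstacle I anticipate is justifying the change of variables rigorously across the full circle and correctly bookkeeping the $n$ branches: the function $\psi$ sweeps through an interval of length $2n\pi$ as $\theta$ traverses one period of length $2\pi$, so the single-valued substitution on $[\theta_0 - 2\pi, \theta_0]$ must be repackaged as a sum over the $n$ preimages of each target point $w$. Getting the Jacobian weight to match $\frac{R(z)}{zR'(z)}$ exactly, with the correct normalization absorbing the factor $\frac{1}{n}$ in the definition of $\mathcal{L}_R$, is where care is needed; the identity (\ref{derivative}) showing $\frac{zR'(z)}{R(z)} > 0$ on $\mathbb{T}$ is exactly what guarantees the weight is real and positive so that the change of variables is legitimate and the resulting weighted sum is the genuine transfer operator.
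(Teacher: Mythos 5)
Your proposal is correct and follows essentially the same route as the paper: the paper also verifies $(C_R\xi_l \mid f) = (\xi_l \mid \mathcal{L}_R(f))$ for the monomials $\xi_l(z)=z^l$ by the change of variables $t=\psi(\theta)$, splitting $[0,2n\pi]$ into $n$ branches $\sigma_k(t)=\psi^{-1}(t+2(k-1)\pi)$ whose derivatives $\sigma_k'(t)=\frac{R(e^{i\sigma_k(t)})}{R'(e^{i\sigma_k(t)})e^{i\sigma_k(t)}}$ supply exactly the weights of $\mathcal{L}_R$, and then deduces boundedness from the adjoint identity. No substantive difference in method.
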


\begin{proof}
Let $\psi$ be the map defined above.
Set $t=\psi(\theta)$ and $\sigma_k (t) = \psi^{-1} (t+2(k-1) \pi)$
for $1 \leq k \leq n$ and $0 \leq t \leq 2 \pi$.
If we differentiate $R(e^{i \sigma_k(t)})=e^{it}$ with respect to $t$,
then
\[
 \sigma_k ' (t) =   \frac{R(e^{i \sigma_k(t)})}{R' (e^{i \sigma_k(t)}) e^{i
 \sigma_k(t)}}.
\]
Therefore for $\xi_l(z):= z^l$, 
we have that

\begin{align*}
(C_R \xi_l | f)
 &= \frac{1}{2\pi} \int_{0} ^{2 \pi} R(e^{i \theta})^l \overline{f(e^{i \theta})} d\theta
  = \frac{1}{2\pi} \int_{\theta_0- 2 \pi} ^{\theta_0}  R(e^{i \theta})^l \overline{f(e^{i
      \theta})} d\theta \\
 &= \frac{1}{2\pi} \int_{0} ^{2 n \pi} e^{ilt}
    \overline{f(e^{i\psi^{-1}(t)}) (\psi^{-1}(t))'} dt \\
 &= \frac{1}{2\pi} \sum_{k=1} ^n \int_0 ^{2 \pi} e^{ilt}
    \overline{f(e^{i \sigma_k(t)}) \sigma_k '(t)} dt \\
 &= \frac{1}{2\pi} \int_0 ^{2 \pi} e^{ilt}
    \overline{\mathcal{L}_R(f)(e^{it})} dt
  = (\xi_l | \mathcal{L}_R(f) )
\end{align*}
for $f \in H^2(\mathbb{T})$ and $l \geq 0$.
Thus $\widetilde{C_R} ^* = \mathcal{L}_R$ and $\mathcal{L}_R$ is bounded.
\end{proof}

We need an appropriate basis of $H^2 (\mathbb{T})$ for $R$. 
Let
\begin{align*}
 e_l (z) = \begin{cases}
            \displaystyle \frac{\sqrt{1- | \beta_0 |^2}}{1-\overline{\beta_0}z} &
            \text{$l=0$}  \\[10pt]
            \displaystyle \frac{ \alpha_l \sqrt{1- | \beta_l | ^2}}{1- \overline{\beta_l} z} \prod
             _{k=0} ^{l-1} \dfrac{z- \beta_k}{1- \overline{\beta_k}z} &
            \text{$l \geq 1$} \\
           \end{cases}
\end{align*}
where
$\alpha_{kn+l}= \lambda^k, \, \beta_{kn+l} = z_l$
for $k \geq 0$ and $0 \leq l \leq n-1$.
Since $\sum_{k=0} ^{\infty} (1-|\beta_k |) = \infty$,
$\{ e_k \}_{k=0} ^{\infty}$ is an orthonormal basis 
of $H^2 (\mathbb{T})$ as in Ninness, Hjalmarsson and Gustafsson
\cite[Theorem 2.1]{NHG}, 
Ninness and Gustafsson \cite[Theorem 1]{Ninness and Gustafsson}.  
We write
\begin{align*}
 Q_l (z) &= \frac{\sqrt{1- | z_l |^2 }}{1-\overline{z_l} z}, \\
 R_l (z) &= \begin{cases}
            1 & \text{$l=0$}  \\[5pt]
            \displaystyle \prod _{k=0} ^{l-1} \frac{z-z_k}{1- \overline{z_k} z} &
             \text{$l \geq 1$}. \\
            \end{cases}
\end{align*}
Thus
$e_{kn+l}=Q_l R_l R^k$ for $k \geq 0$ and $0 \leq l \leq  n-1$, where
$R^k$ is the $k$-th power of $R$ with respect to pointwise 
multiplication. 

\begin{prop} \label{prop:transfer}
Let $R$ be a finite Blaschke product of degree at least two with $R(0)=0$. 
Then for any $a \in C(\mathbb{T})$, we have 
\[
\widetilde{C_R} ^* T_a \widetilde{C_R} = T_{\mathcal{L}_R(a)}.
\]
\end{prop}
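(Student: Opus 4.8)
The plan is to verify the operator identity by testing it against inner products: it suffices to show that $(\widetilde{C_R}^* T_a \widetilde{C_R} f \mathbin{|} g) = (T_{\mathcal{L}_R(a)} f \mathbin{|} g)$ for all $f,g$ in a dense subspace of $H^2(\mathbb{T})$, say the analytic polynomials, since both sides are bounded operators (boundedness of $\mathcal{L}_R = \widetilde{C_R}^*$ is Lemma \ref{lem:bounded}). Using $\widetilde{C_R}^* = \mathcal{L}_R$ from Lemma \ref{lem:bounded} to shift the adjoint, I would first rewrite the left-hand side as $(T_a \widetilde{C_R} f \mathbin{|} \widetilde{C_R} g)$. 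Because $\widetilde{C_R} g$ lies in $H^2(\mathbb{T})$, the projection $P_{H^2}$ hidden inside $T_a$ may be discarded against it, so this inner product equals the honest $L^2$-integral $\frac{1}{2\pi}\int_0^{2\pi} a(e^{i\theta})\, (\widetilde{C_R} f)(e^{i\theta})\, \overline{(\widetilde{C_R} g)(e^{i\theta})}\, d\theta$. Here $\widetilde{C_R} f = f \circ \widetilde{R}$ and $\widetilde{C_R} g = g \circ \widetilde{R}$ (valid since $R$ is inner, by Ryff's theorem as invoked in the text), and the integrand is in $L^1$ by Cauchy--Schwarz because $a$ is bounded while both factors lie in $L^2$.

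The heart of the argument is then the same change of variables used in the proof of Lemma \ref{lem:bounded}. Writing $R(e^{i\theta}) = e^{i\psi(\theta)}$, setting $t = \psi(\theta)$, and decomposing via the branches $\sigma_k(t) = \psi^{-1}(t + 2(k-1)\pi)$, $1 \le k \le n$, the single integral over a fundamental domain of length $2\pi$ splits into a sum of $n$ integrals over $[0,2\pi]$. Under this substitution each factor $(\widetilde{C_R} f)(e^{i\theta}) = f(e^{it})$ and $(\widetilde{C_R} g)(e^{i\theta}) = g(e^{it})$ becomes independent of $k$, while the Jacobian $\sigma_k'(t) = R(e^{i\sigma_k(t)})/(e^{i\sigma_k(t)} R'(e^{i\sigma_k(t)}))$ combines with the surviving factor $a(e^{i\sigma_k(t)})$. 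Since $\{e^{i\sigma_k(t)}\}_{k=1}^n$ is exactly the fiber $R^{-1}(e^{it})$ and $R(z)/(zR'(z))$ is precisely the weight appearing in the definition of $\mathcal{L}_R$, summing over $k$ collapses the bracket to $\mathcal{L}_R(a)(e^{it})$.

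This turns the expression into $\frac{1}{2\pi}\int_0^{2\pi} \mathcal{L}_R(a)(e^{it})\, f(e^{it})\, \overline{g(e^{it})}\, dt = (\mathcal{L}_R(a)\, f \mathbin{|} g)$, and reinstating the projection (again harmless because $g \in H^2(\mathbb{T})$ and $\mathcal{L}_R(a) \in C(\mathbb{T})$) identifies this with $(T_{\mathcal{L}_R(a)} f \mathbin{|} g)$, as desired. I expect the only delicate point to be the bookkeeping in the change of variables---checking that the $n$ branches $\sigma_k$ enumerate $R^{-1}(e^{it})$ without repetition and that the boundary-value functions may legitimately be composed with $\widetilde{R}$. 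All of this is controlled by the properties of $\psi$ recorded before Lemma \ref{lem:bounded}, so the computation is essentially a weighted generalization of that lemma, with the monomial $R(e^{i\theta})^l$ there replaced by the product $a(e^{i\theta})\, (\widetilde{C_R} f)(e^{i\theta})\, \overline{(\widetilde{C_R} g)(e^{i\theta})}$ here.
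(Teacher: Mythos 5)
Your proof is correct, but it takes a genuinely different route from the paper's. The paper reduces to the monomials $a(z)=z^{j}$, $j\geq 0$, expands such an $a$ in the rational orthonormal basis $\{e_{l}\}$ as $a=\sum_{l}c_{l}R^{l}+g$ with $g\in(\im\widetilde{C_R})^{\perp}\cap H^{2}(\mathbb{T})$, observes that multiplication by $R^{m}$ preserves $(\im\widetilde{C_R})^{\perp}\cap H^2(\mathbb{T})$, and then matches both sides on the monomials $\xi_{m}$ using the identity $\mathcal{L}_R(R^{l})=w^{l}$ (itself a consequence of $\mathcal{L}_R(1)=1$ and the transfer-operator property); negative powers are handled by taking adjoints. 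You instead test the identity weakly against polynomials and run the same change of variables $t=\psi(\theta)$ with branches $\sigma_{k}$ that the paper uses in Lemma \ref{lem:bounded}, so that the Jacobian $\sigma_{k}'(t)=R(e^{i\sigma_k(t)})/(e^{i\sigma_k(t)}R'(e^{i\sigma_k(t)}))$ paired with $a(e^{i\sigma_k(t)})$ sums over the fiber to $\mathcal{L}_R(a)(e^{it})$ by the very definition of $\mathcal{L}_R$. Your argument is more self-contained --- it needs neither the basis $\{e_{l}\}$ nor the decomposition of $\im\widetilde{C_R}$, and it works verbatim for any $a\in L^{\infty}(\mathbb{T})$ in one stroke, with the small caveat that one should note $\sigma_k'(t)>0$ (so its placement relative to complex conjugation is immaterial) and that the $n$ branches enumerate $R^{-1}(e^{it})$ exactly once, both of which follow from the displayed positivity of $zR'(z)/R(z)$ on $\mathbb{T}$. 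What the paper's basis-driven proof buys in exchange is that the objects it introduces ($\im\widetilde{C_R}$, the functions $Q_{l}R_{l}$) are reused immediately in Lemma \ref{lem:CONS} and in the main theorem.
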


\begin{proof}
We first examine the case that $a(z) = z^j$ for $j \geq 0$.
Consider the $L^2$-expansion of $a$ by the basis $\{ e_l \}_{l=0} ^{\infty}$:
\[
 a = \sum_{l=0} ^{\infty} c_l R^l + g,
\]
where
$g \in (\im \widetilde{C_R}) ^{\perp} \cap H^2 (\mathbb{T})$.
For $\xi_m(z) = z^m$ with $m \geq 0$, we have
\[
 (T_a \widetilde{C_R} \xi_m)(z) = a(z) R(z)^m
 = \sum_{l=0} ^{\infty} c_l R(z)^{l+m} + g(z)R(z)^m .
\]
It is clear that
\begin{align*}
\im \widetilde{C_R} &= \overline{\spn} \{e_{kn} \, ; \,  k \geq 0 \} \\
\intertext{and}
(\im \widetilde{C_R})^{\perp} \cap H^2 (\mathbb{T}) &= \overline{\spn}
 \{e_{kn+l} \, ; \, k \geq 0, \, 1 \leq l \leq n-1  \},
\end{align*}
where $\overline{\spn}$ means the closure of linear span. 
Therefore 
$gR^m$ is also in $(\im \widetilde{C_R})^{\perp} \cap H^2 (\mathbb{T})$.
Since $\widetilde{C_R}$ is an isometry, we have that 
\[
 (\widetilde{C_R} ^* T_a \widetilde{C_R} \xi_m)(z) = \sum_{l=0} ^{\infty} c_l z^{l+m}.
\]
On the other hand  $\widetilde{C_R} ^* = \mathcal{L}_R$ by Lemma  \ref{lem:bounded}, 
hence 
\[
 \mathcal{L}_R(a) = \sum_{l=0} ^{\infty} c_l \mathcal{L}_R (R^l).
\]
By Lemma \ref{lem:L(1)=1},
\[
 \mathcal{L}_R (R^l)(w) = \frac{1}{n}\sum_{z \in R^{-1} (w)} h(z)R(z)^l 
=  (\mathcal{L}_R (1)(w))w^l = w^l.
\]
Thus
\[
 \mathcal{L}_R (a)(w) = \sum_{l=0} ^{\infty} c_l w^l  \ \ \ \text{as $L^2$-convergence}.
\]
Since $\mathcal{L}_R(a) \in H^{\infty}(\mathbb{T})$, we have
\[
 (T_ {\mathcal{L}_R(a)} \xi_m)(w) = (\mathcal{L}_R(a) \xi_m)(w)
 = (M_{\xi_m} \mathcal{L}_R (a))(w) = \sum_{l=0} ^{\infty} c_l w^{l+m}
\]
for $m \geq 0$, where $M_{\xi_m}$ is a multiplication operator by
$\xi_m$. Therefore we obtain that 
$\widetilde{C_R} ^* T_a \widetilde{C_R} = T_{\mathcal{L}_R(a)}$.

For the remaining case that $j \leq 0$, the formula 
$\widetilde{C_R} ^* T_a \widetilde{C_R} = T_{\mathcal{L}_R(a)}$
also holds because 
$\mathcal{L}_R$ is positive and $T_a ^* =
T_{\overline{a}}$. 
\end{proof}

\begin{lem} \label{lem:CONS}
Let the notation be as above. Then
\[
 \sum_{k=1} ^n T_{Q_{k-1}R_{k-1}} \widetilde{C_R} \widetilde{C_R} ^*
 (T_{Q_{k-1} R_{k-1}} )^* = I.
\]
\end{lem}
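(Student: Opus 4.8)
The plan is to recognize the left-hand side as a sum of mutually orthogonal range projections that together exhaust $H^2(\mathbb{T})$. First I would record that, since $\widetilde{C_R}$ is an isometry with $\widetilde{C_R}\xi_m = R^m$ (where $\xi_m(z)=z^m$), the operator $\widetilde{C_R}\widetilde{C_R}^*$ is the orthogonal projection onto $\im \widetilde{C_R} = \overline{\spn}\{e_{kn} : k \geq 0\}$, exactly as noted in the proof of Proposition \ref{prop:transfer}. The key observation is that each symbol $Q_{k-1}R_{k-1}$ lies in $H^{\infty}(\mathbb{T})$, being a product of a bounded analytic function and a finite Blaschke product, so the Toeplitz operator $T_{Q_{k-1}R_{k-1}}$ acts simply as the multiplication operator $f \mapsto Q_{k-1}R_{k-1}\,f$ on $H^2(\mathbb{T})$, with no projection correction required.

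Next, for $0 \leq l \leq n-1$ I would set $V_l := T_{Q_l R_l}\widetilde{C_R}$ and compute directly on the standard basis, using the factorization $e_{mn+l} = Q_l R_l R^m$:
\[
 V_l \xi_m = T_{Q_l R_l} R^m = Q_l R_l R^m = e_{mn+l}, \qquad m \geq 0.
\]
Since $\{\xi_m\}_{m\geq 0}$ is an orthonormal basis of $H^2(\mathbb{T})$ and $\{e_{mn+l}\}_{m \geq 0}$ is an orthonormal set, $V_l$ carries an orthonormal basis onto an orthonormal set and is therefore an isometry with range $\overline{\spn}\{e_{mn+l} : m \geq 0\}$. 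Consequently
\[
 T_{Q_l R_l}\widetilde{C_R}\widetilde{C_R}^*(T_{Q_l R_l})^* = V_l V_l^*
\]
is precisely the orthogonal projection onto $\overline{\spn}\{e_{mn+l} : m \geq 0\}$.

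Finally I would sum over $l = k-1$ with $1 \leq k \leq n$. Because every nonnegative integer is uniquely of the form $mn + l$ with $m \geq 0$ and $0 \leq l \leq n-1$, and because $\{e_j\}_{j\geq 0}$ is an orthonormal basis, the subspaces $\overline{\spn}\{e_{mn+l} : m \geq 0\}$ for $l = 0,\dots,n-1$ are mutually orthogonal and their closed linear span is all of $H^2(\mathbb{T})$. Hence the projections $V_l V_l^*$ are mutually orthogonal and add up to the identity, which is exactly the claimed identity after reindexing $l = k-1$.

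The computation is essentially bookkeeping once the right objects are in place; the only point requiring care is the verification that each $V_l$ is a genuine isometry. This hinges on two facts that must not be conflated: that $T_{Q_l R_l}$ is an honest multiplication operator, so that $V_l\xi_m = Q_l R_l R^m$ with no projection term, which uses analyticity of the symbol; and that the resulting vectors $e_{mn+l}$ form an orthonormal set, which uses the completeness of the basis $\{e_k\}$ quoted from Ninness and Gustafsson. With these two inputs, the splitting of $H^2(\mathbb{T})$ into the $n$ pieces indexed by the residue $l$ modulo $n$ is what forces the range projections to sum to $I$.
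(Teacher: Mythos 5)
Your proposal is correct and follows essentially the same route as the paper: both compute $T_{Q_{k-1}R_{k-1}}\widetilde{C_R}\,\xi_l = Q_{k-1}R_{k-1}R^l = e_{ln+(k-1)}$, conclude that each $T_{Q_{k-1}R_{k-1}}\widetilde{C_R}$ is an isometry carrying the standard basis onto the orthonormal set $\{e_{ln+(k-1)} : l \geq 0\}$, and sum the resulting mutually orthogonal range projections using that $\{e_j\}_{j\geq 0}$ is a complete orthonormal basis. Your write-up merely makes explicit two points the paper leaves implicit (that the analytic symbol makes $T_{Q_lR_l}$ a genuine multiplication operator, and the residue-class decomposition of the index set), which is fine.
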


\begin{proof}
We have
\[
 T_{Q_{k-1} R_{k-1}} \widetilde{C_R} \xi_l = Q_{k-1} R_{k-1} R^l = e_{ln+(k-1)}
\]
for $1 \leq k \leq n$ and $l \geq 0$. Thus $T_{Q_{k-1} R_{k-1}} \widetilde{C_R}$ is
an isometry and the desired equality follows.
\end{proof}

We can now state our main theorem. 

\begin{thm}
Let $R$ be a finite Blaschke product of degree at least two and $R(0)=
0$, that is
\[
 R(z)=\lambda z \prod_{k=1} ^{n-1} \frac{z-z_k}{1-\overline{z_k} z},
\]
where $n \geq 2, \, |\lambda| =1$ and $| z_k | < 1$.
Then the quotient $C^*$-algebra $\mathcal{OC}_R$ of 
Toeplitz-composition $C^*$-algebra $\mathcal{TC}_R$ by 
the ideal of compact operators is 
isomorphic to the $C^*$-algebra $\mathcal{O}_R (J_R)$
associated with the complex dynamical system 
on the Julia set $J_R$ 
by an isomorphism $\Psi$ such that $\Psi(\pi(T_a)) = a$ for $a \in C(\mathbb{T})$ and
$\Psi(\pi(\widetilde{C_R})) = \sqrt{\frac{R}{z R'}} S_1$,
where $\pi$ is the canonical quotient map $\mathcal{TC}_R$ to
$\mathcal{OC}_R$ and $1$ is the constant map in $X_R$ taking constant 
value $1$. Moreover the $C^*$-algebra $\mathcal{OC}_R$ is 
simple and purely infinite. 
\end{thm}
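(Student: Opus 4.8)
The plan is to realize $\mathcal{OC}_R$ as the Cuntz–Pimsner algebra $\mathcal{O}_{R,h}(\mathbb{T})$ and then transport everything to $\mathcal{O}_R(J_R)$ through the isomorphism $\Phi$ of Lemma~\ref{lem:isomorphic}. Since the $C^*$-algebra generated by $T_z$ already contains $\mathcal{K}$, the quotient map $\pi$ carries the commutative Toeplitz subalgebra onto $C(\mathbb{T})$ with $\pi(T_a)\leftrightarrow a$; I take this injective $*$-homomorphism $a\mapsto\pi(T_a)$ as the coefficient-algebra part of a representation of $\mathcal{O}_{R,h}(\mathbb{T})$. Write $s=\pi(\widetilde{C_R})$, and identify $X_{R,h}=C(\graph R|_{\mathbb{T}})$ with $C(\mathbb{T})$ via $\xi\mapsto(z\mapsto\xi(z,R(z)))$, still denoted $\xi$; then the module is singly generated (over the left action) by the constant function $1$, and I set $\hat{S}_\xi\mapsto\sqrt{n}\,\pi(T_\xi\widetilde{C_R})$, so in particular $\hat{S}_1\mapsto\sqrt{n}\,s$.

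The core of the argument is to verify the defining relations of $\mathcal{O}_{R,h}(\mathbb{T})$ for this assignment, and the earlier lemmas supply exactly what is needed. The left-module relation $a\hat{S}_\xi=\hat{S}_{a\cdot\xi}$ reduces to $\pi(T_aT_\xi)=\pi(T_{a\xi})$, which holds because the Toeplitz semicommutator $T_aT_\xi-T_{a\xi}$ is compact for continuous symbols. The inner-product relation follows from Proposition~\ref{prop:transfer} together with the same compactness fact and $T_\xi^{*}=T_{\overline{\xi}}$: one computes $\hat{S}_\xi^{*}\hat{S}_\eta\mapsto n\,\pi(\widetilde{C_R}^{*}T_{\overline{\xi}\eta}\widetilde{C_R})=n\,\pi(T_{\mathcal{L}_R(\overline{\xi}\eta)})=n\,\mathcal{L}_R(\overline{\xi}\eta)$, and since $n\,\mathcal{L}_R(\overline{\xi}\eta)(w)=\sum_{R(z)=w}h(z)\overline{\xi(z)}\eta(z)=(\xi|\eta)_{A,h}(w)$ this is precisely the required identity (the normalization $\mathcal{L}_R(1)=1$ from Lemma~\ref{lem:L(1)=1} is what makes the constant $\sqrt{n}$ come out right). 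For the covariance/Cuntz relation I take $v_k$ to be the function $\tfrac{1}{\sqrt{n}}Q_{k-1}R_{k-1}$, so that $\sum_{k=1}^n\hat{S}_{v_k}\hat{S}_{v_k}^{*}\mapsto\pi\!\left(\sum_{k=1}^n T_{Q_{k-1}R_{k-1}}\widetilde{C_R}\widetilde{C_R}^{*}T_{Q_{k-1}R_{k-1}}^{*}\right)=\pi(I)=I$ by Lemma~\ref{lem:CONS}. The remaining right-module relation $\hat{S}_\xi b=\hat{S}_{\xi\cdot b}$ is then automatic: multiplying $\hat{S}_\xi b$ by $\sum_k\hat{S}_{v_k}\hat{S}_{v_k}^{*}=I$ and using $(v_k|\xi\cdot b)_{A,h}=(v_k|\xi)_{A,h}\,b$ reduces it to the already-verified relations. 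By the universal property of $\mathcal{O}_{R,h}(\mathbb{T})$ (in the form stated in its definition, where exhibiting a finite family with $\sum\hat{S}_{v_k}\hat{S}_{v_k}^{*}=I$ replaces the Cuntz–Pimsner covariance), this produces a $*$-homomorphism $\Psi_0:\mathcal{O}_{R,h}(\mathbb{T})\to\mathcal{OC}_R$ with $\Psi_0(a)=\pi(T_a)$ and $\Psi_0(\hat{S}_1)=\sqrt{n}\,s$.

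Surjectivity of $\Psi_0$ is immediate, since its range contains $\pi(T_z)$ and $s=\tfrac{1}{\sqrt{n}}\Psi_0(\hat{S}_1)$, which generate $\mathcal{OC}_R$. For injectivity I invoke that $\mathcal{O}_{R,h}(\mathbb{T})\cong\mathcal{O}_R(J_R)$ is simple by \cite{Kajiwara and Watatani}; as $\Psi_0$ is nonzero it is therefore faithful, hence an isomorphism. Setting $\Psi=\Phi\circ\Psi_0^{-1}$ gives the desired isomorphism $\mathcal{OC}_R\to\mathcal{O}_R(J_R)$, and one reads off $\Psi(\pi(T_a))=\Phi(a)=a$ and, from $\Psi_0^{-1}(\pi(\widetilde{C_R}))=\tfrac{1}{\sqrt{n}}\hat{S}_1$, that $\Psi(\pi(\widetilde{C_R}))=\Phi\bigl(\tfrac{1}{\sqrt{n}}\hat{S}_1\bigr)=\tfrac{1}{\sqrt{n}}h^{1/2}S_1=\sqrt{R/(zR')}\,S_1$, matching the stated formula. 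Finally, simplicity and pure infiniteness of $\mathcal{OC}_R$ are inherited across this isomorphism from $\mathcal{O}_R(J_R)$, which is simple and purely infinite by \cite{Kajiwara and Watatani}.

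I expect the main delicate point to be the bookkeeping that makes all relations come out \emph{exactly}: keeping the normalizing constants $\sqrt{n}$ and $h^{1/2}$ consistent simultaneously in the inner-product relation and in the Cuntz relation, and carefully justifying every passage modulo $\mathcal{K}$ via compactness of Toeplitz semicommutators. A secondary point worth isolating is that the right-module relation need not be checked by hand (which would otherwise require controlling $\widetilde{C_R}T_b-T_{b\circ R}\widetilde{C_R}$ for anti-analytic $b$); it is cleaner to derive it from the other three. The one genuinely external ingredient is the simplicity of $\mathcal{O}_R(J_R)$, on which the injectivity of $\Psi_0$ entirely rests.
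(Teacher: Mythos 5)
Your proposal is correct and follows essentially the same route as the paper: both verify the Cuntz--Pimsner relations of $\mathcal{O}_{R,h}(\mathbb{T})$ for $V_\xi=\sqrt{n}\,\pi(T_p\widetilde{C_R})$ using Lemma~\ref{lem:L(1)=1}, Proposition~\ref{prop:transfer} and Lemma~\ref{lem:CONS}, then invoke universality together with simplicity and transport the result through the isomorphism $\Phi$ of Lemma~\ref{lem:isomorphic}. The only deviations are minor and sound: your normalization $v_k=n^{-1/2}Q_{k-1}R_{k-1}$ is the one actually consistent with $\sum_k V_{v_k}V_{v_k}^*=I$ (the paper's corresponding display silently drops a factor of $n$), and deriving the right-module relation from the other three instead of checking $\pi(\widetilde{C_R}T_b)=\pi(T_{b\circ R}\widetilde{C_R})$ directly is a valid shortcut.
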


\begin{proof}
For any $\xi \in X_{R,h}$ and $a \in A$,
let $p(z) = \xi (z, R(z)), \, \rho(a) =\pi(T_a), \,
V_{\xi} = n^{1/2} \rho(p)\pi(\widetilde{C_R})$.
We have
\[
 \rho(a) V_{\xi} = n^{1/2} \rho(ap) \pi(\widetilde{C_R}) = V_{a \cdot \xi}.
\]
Since, for $b \in A$
\[
 \pi(\widetilde{C_R})\rho(b) = \pi(\widetilde{C_R}T_b)
= \pi(T_{b \circ R} \widetilde{C_R}) = \rho(b \circ R) \pi(\widetilde{C_R}),
\]
we have  that
\begin{align*}
 V_{\xi} \rho(b) &= n^{1/2} \rho(p) \pi(\widetilde{C_R}) \rho(b)
 = n^{1/2} \rho(p) \rho(b \circ R) \pi(\widetilde{C_R}) \\
 &= n^{1/2} \rho(p(b \circ R)) \pi(\widetilde{C_R}) = V_{\xi \cdot b}.
\end{align*}
For any $ \eta \in X_{R,h}$, define $q(z) = \eta (z, R(z))$.
By Proposition \ref{prop:transfer},
\begin{align*}
 V_{\xi} ^* V_{\eta} &= n \pi(\widetilde{C_R}^*) \rho(\overline{p}) \rho(q)
 \pi(\widetilde{C_R})
 = n \pi(\widetilde{C_R} ^* T _{\overline{p}q} \widetilde{C_R}) \\
 &= n \pi(T_{\mathcal{L}_R (\overline{p} q)}) = \rho(n
 \mathcal{L}_R (\overline{p}q)) = \rho(( \xi | \eta )_{A,h}).
\end{align*}
Set $v_k (z, R(z))= Q_{k-1} (z) R_{k-1} (z)$ for $1 \leq k \leq n$.
By Lemma \ref{lem:CONS},
\[
 \sum_{k=1} ^n V_{v_k} V_{v_k} ^*
 = \sum_{k=1} ^n\pi(T_{Q_{k-1}R_{k-1}} \widetilde{C_R}
   \widetilde{C_R} ^* (T_{Q_{k-1} R_{k-1}}) ^*) = I.
\]
By the universality and the simplicity of $\mathcal{O}_{R,h} (\mathbb{T})$,
there exsists an isomorphism $\Omega : \mathcal{O}_{R,h}(\mathbb{T}) \rightarrow \mathcal{OC}_R$
such that $\Omega(\hat{S}_{\xi})= V_{\xi}$ and $\Omega(a) =\rho(a)$
for $\xi \in X_{R,h}, \, a \in A$.
Let $\Phi$ be the map in  Lemma \ref{lem:isomorphic} and put $\Psi = \Phi \circ \Omega^{-1}$.
Then $\Psi$ is the desired isomorphism. Since it is proved in 
\cite{Kajiwara and Watatani}  that 
$C^*$-algebra $\mathcal{O}_R(J_R)$ is simple and purely infinite, 
so is $C^*$-algebra $\mathcal{OC}_R$. 
The rest is now clear. 
\end{proof}

\begin{rem}
It is important to notice that 
the element $\pi(\widetilde{C_R})$ of the composition operator 
in the quotient algebra 
exactly corresponds to the implementing isometry operator in 
Exel's crossed product $A \rtimes _{\alpha, \mathcal{L}_R} \mathbb{N}$ 
in \cite{Exel}, 
which depends on the transfer opearator $\mathcal{L}_R$. 
It directly follows from the fact that  $\mathcal{O}_{R,h} (\mathbb{T})$ 
is naturally isomorphic to $A \rtimes _{\alpha, \mathcal{L}_R} \mathbb{N}$. 
\end{rem}

\begin{ex}
Let $R(z) = z^n$ for $n \geq 2$. Then 
the Hilbert bimodule $X_R$ over $A = C({\mathbb T})$ 
is isomorphic to $A^n$ as a right $A$-module. 
In fact, let $u_i(z,w) = \frac{1}{\sqrt{n}} z^{i-1}$ for 
$i = 1,\dots, n$.  Then $(u_i |u_j)_A = \delta_{i,j}I$ 
and $\{u_1, \dots, u_n\}$ is a basis of $X_R$. 
Hence $S_i := S_{u_i}$, $(i = 1,...,n)$ are generators of the 
Cuntz algebra $\mathcal{O}_n$.  We see that 
$(z \cdot u_i)(z,R(z)) = u_{i+1}(z,R(z))$ 
for $i =1,...,n-1$ and $(z \cdot u_n)(z,R(z)) = z^n = (u_1\cdot z)(z,R(z))$ and
the left multiplication by $z$ is a unitary $U$. 
Therefore  $C^*$-algebra $\mathcal{O}_R (J_R)$
associated with the complex dynamical system on the Julia set $J_R$ 
is the universal $C^*$-algebra generated by a unitary $U$ and $n$ isometries 
$S_1, ...,S_n$ satisfying $S_1S_1^* + \dots + S_nS_n^* = I$ , 
$US_i = S_{i +1}$ for $i =1,...,n-1$ and $US_n = S_1U$. 
In this way, the operator $S_1$ corresponds to the element
$\pi(\widetilde{C_R}) \in \mathcal{OC}_R$ of 
the composition operator $C_R$ and the unitary $U$  corresponds to 
the element $\pi(T_z) \in \mathcal{OC}_R$. Moreover we find that a commutation 
relation $U^nS_1 = S_1U$ in the $C^*$-algebra $\mathcal{O}_R (J_R)$
and a commutation relation $T_{R(z)}C_R  = C_RT_z$  in the 
Toelitz-composion $C^*$-algebra $\mathcal{TC}_R$ are essentially same one. 
\end{ex}

The Toeplitz-compostion $C^*$-algebra 
depends on the analytic structure of the Hardy space by the costruction. 
The finite  Blaschke product $R$ is not conjugate with $z^n$  by any 
M\"{o}bius automorphism unless $R(z) = \lambda z^n$. But we can show that the 
the quotient algebra  $\mathcal{OC} _{R}$ is isomorphic to $\mathcal{OC} _{z^n}$as a corollary of our main theorem. 

\begin{cor}
Let $R$ be a finite Blaschke product of degree $n \geq 2$ with $R(0)=0$. 
Then the quotient $C^*$-algebra $\mathcal{OC}_R$ is isomorphic to 
the $C^*$-algebra $\mathcal{OC}_{z^n}$. Moreover 
$K_0(\mathcal{OC}_R) \simeq 
{\mathbb Z} \oplus {\mathbb Z}/(n-1){\mathbb Z} $  
and $K_1(\mathcal{OC}_R) \simeq {\mathbb Z}     $. 
\end{cor}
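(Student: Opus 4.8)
The plan is to combine the main theorem with a conjugacy argument in topological dynamics and then to run Pimsner's six-term exact sequence. Since the main theorem identifies $\mathcal{OC}_R$ with $\mathcal{O}_R(J_R)$ and, applied to the symbol $z^n$, identifies $\mathcal{OC}_{z^n}$ with $\mathcal{O}_{z^n}(\mathbb{T})$, it suffices to produce an isomorphism $\mathcal{O}_R(J_R)\cong \mathcal{O}_{z^n}(\mathbb{T})$ at the level of the underlying $C^*$-correspondences. The key dynamical input is that $R|_{\mathbb{T}}$ is topologically conjugate to $z\mapsto z^n$. First I would observe from (\ref{derivative}) that, writing $R(e^{i\theta})=e^{i\psi(\theta)}$, the lift satisfies $\psi'(\theta)=1+\sum_{k=1}^{n-1}\frac{1-|z_k|^2}{|e^{i\theta}-z_k|^2}$, a strictly positive continuous function on the compact circle, hence bounded below by some $1+\delta>1$. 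Thus $R|_{\mathbb{T}}$ is an orientation-preserving expanding self-covering of the circle of degree $n$, and by the structural stability of expanding circle maps (Shub) there is a homeomorphism $h:\mathbb{T}\to\mathbb{T}$ with $h(R(z))=h(z)^n$.

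Next I would transport this conjugacy to the bimodules. The map $H(z,R(z))=(h(z),h(z)^n)$ is a homeomorphism $\graph R|_{\mathbb{T}}\to \graph(z^n)|_{\mathbb{T}}$, and $\beta(a)=a\circ h^{-1}$ is a $*$-automorphism of $C(\mathbb{T})$. I would verify that $\Theta(\xi)=\xi\circ H^{-1}$ intertwines the bimodule structures: it is compatible with $\beta$ on both the left and right actions, and --- crucially, because the branch index is identically $1$ on $\mathbb{T}$ for both maps --- it preserves the $C(\mathbb{T})$-valued inner products, since $h$ carries $R^{-1}(w)$ bijectively onto $(z^n)^{-1}(h(w))$. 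Hence $\Theta$ is an isomorphism of Hilbert bimodules covering $\beta$, and by the universal property of the Cuntz--Pimsner construction it induces $\mathcal{O}_R(J_R)\cong\mathcal{O}_{z^n}(\mathbb{T})$, whence $\mathcal{OC}_R\cong\mathcal{OC}_{z^n}$.

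For the $K$-theory I would work directly with $\mathcal{O}:=\mathcal{O}_{z^n}(\mathbb{T})$ via Pimsner's exact sequence. Here $A=C(\mathbb{T})$ with $K_0(A)\cong\mathbb{Z}$ generated by $[1]$ and $K_1(A)\cong\mathbb{Z}$ generated by $[z]$; since $X_{z^n}\cong A^n$ is finitely generated projective, the left action lands in $K(X)$ and is injective, so Pimsner's six-term sequence applies with the horizontal maps given by $\operatorname{id}-[X]_*$. The content is to evaluate $[X]_*$. Using the basis $u_i=\frac{1}{\sqrt n}z^{i-1}$ of the Example, the left action of the generator $z$ is the unitary matrix $\phi(z)\in M_n(A)$ sending $u_i\mapsto u_{i+1}$ for $i<n$ and $u_n\mapsto u_1\cdot z$; unitality of $\phi$ forces $[X]_*$ to be multiplication by $n$ on $K_0$, while on $K_1$ one has $[X]_*[z]=[\phi(z)]$, detected by the winding number of $\det\phi(z)=(-1)^{n-1}z$, which equals $1$, so $[X]_*$ is the identity on $K_1$.

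Feeding $\operatorname{id}-[X]_*=(1-n)$ on $K_0$ and $0$ on $K_1$ into
\[
K_0(A)\xrightarrow{1-n}K_0(A)\xrightarrow{j_*}K_0(\mathcal{O})\xrightarrow{\partial}K_1(A)\xrightarrow{0}K_1(A)\xrightarrow{j_*}K_1(\mathcal{O})\xrightarrow{\partial}K_0(A)
\]
then yields, since $1-n\neq 0$, a split short exact sequence $0\to\mathbb{Z}/(n-1)\mathbb{Z}\to K_0(\mathcal{O})\to\mathbb{Z}\to 0$ and an isomorphism $K_1(\mathcal{O})\cong\mathbb{Z}$, giving the stated groups. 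I expect the main obstacle to be twofold: citing the expanding-map conjugacy in the merely topological form that the correspondence requires (so that only the homeomorphism $h$, not a diffeomorphism, is needed), and the correct evaluation of $[X]_*$ on $K_1$ --- that is, confirming that the twist in the left action contributes trivially to the winding number of the determinant.
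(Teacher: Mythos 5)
Your proposal is correct and follows the same route as the paper: the main theorem reduces everything to $\mathcal{O}_R(J_R)$, the bound $\psi'>1$ shows $R|_{\mathbb{T}}$ is expanding so Shub's theorem gives the topological conjugacy with $z^n$, and the conclusion then rests on conjugacy-invariance of the Cuntz--Pimsner algebra plus a $K$-theory computation. The only difference is one of exposition: the paper delegates those last two steps to \cite{Kajiwara and Watatani}, whereas you carry them out explicitly (the bimodule isomorphism covering $a\mapsto a\circ h^{-1}$, using that the branch index is $1$ on $\mathbb{T}$, and the Pimsner sequence with $[X]_*=n$ on $K_0$ and the identity on $K_1$), and both computations are right.
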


\begin{proof}
Since 
$$
\psi '(\theta) 
  = \frac{e^{i\theta} R'(e^{i\theta})}{R(e^{i\theta})} 
= 1 + \sum_{k=1}^{n-1}
 \frac{1-|z_k|^2}{|e^{i\theta}-z_k|^2} >1,  
$$
$R|_{\mathbb T}$ is an expanding map of degree $n$ and 
$R|_{\mathbb T}$ is topological conjugate to $z^n$ on 
$\mathbb T$ by \cite{Shub}. See a general condition 
given in  Martin \cite{Martin}. Therefore the statement 
follows from the above theorem and the results in 
\cite{Kajiwara and Watatani}.

\end{proof}

\end{document}